\documentclass[11pt]{article}
\usepackage{amsfonts,amscd,amssymb, amsmath}
\setlength{\textheight}{8.9in}          
\setlength{\textwidth}{6.3in}           
\setlength{\headheight}{12pt}           
\setlength{\headsep}{25pt} \setlength{\footskip}{25pt}

\setlength{\oddsidemargin}{0.10in}      
\setlength{\evensidemargin}{0.10in}     
\setlength{\marginparwidth}{0.08in}
\setlength{\marginparsep}{0.001in}      
\setlength{\marginparpush}{0.4\parindent}
\setlength{\topmargin}{-0.54cm}         

\setlength{\columnsep}{10pt} \setlength{\columnseprule}{0pt}
\setlength{\parindent}{15pt}

\newtheorem{definition}{Definition}[section]

\newtheorem{proposition}[definition]{Proposition}

\newtheorem{theorem}[definition]{Theorem}

\newcommand{\gsl}{\mbox{${\;}$$>\hspace{-1.7mm}\triangleleft$${\;}$}}

\def\rawo\lonra{\longrightarrow}

\def\ot{\otimes}

\newcommand{\selabel}[1]{\label{se:#1}}

\allowdisplaybreaks[4]

\newenvironment{proof}{{\it Proof.}}{\hfill $ \square $ \vskip 4mm}

\begin{document}
\title{Invariance under twisting for crossed products
\thanks{Research partially supported by the CNCSIS project 
 ''Hopf algebras, cyclic homology and monoidal categories'', 
contract nr. 560/2009, CNCSIS code $ID_{-}69$.}}
\author {Florin Panaite\\
Institute of Mathematics of the 
Romanian Academy\\ 
PO-Box 1-764, RO-014700 Bucharest, Romania\\
e-mail: Florin.Panaite@imar.ro
}
\date{}
\maketitle

\begin{abstract}
We prove a result of the type ''invariance under twisting'' for  
Brzezi\'{n}ski's crossed products, as a common generalization of the 
invariance under twisting for twisted tensor products of algebras and 
the invariance under twisting for quasi-Hopf smash products. It turns out 
that this result contains also as a particular case the equivalence of crossed 
products by a coalgebra (due to Brzezi\'{n}ski). 
\end{abstract}
\section*{Introduction}
${\;\;\;\;}$If $A$ and $B$ are (associative unital) algebras and 
$R:B\ot A\rightarrow A\ot B$ is a linear map satisfying certain axioms 
(such an $R$ is called a {\em twisting map}) then $A\ot B$ becomes an 
associative unital algebra with a multiplication defined in terms of 
$R$ and the multiplications of $A$ and $B$; this algebra structure 
on $A\ot B$ is denoted by $A\ot _RB$ and called the 
{\em twisted tensor product} of $A$ and $B$ afforded by $R$ 
(cf. \cite{Cap}, \cite{VanDaele}). 
This construction appeared in a number of contexts and has 
various applications, see \cite{jlpvo} for a detailed discussion and 
references. Moreover, there are many concrete examples of 
twisted tensor products, such as the Hopf smash product 
and other kinds of products arising in Hopf algebra theory. 

A very general result about twisted tensor products of algebras was obtained 
in \cite{jlpvo}. It was directly inspired by the invariance under twisting of the 
Hopf smash product (and thus it was called {\em invariance under twisting} 
for twisted tensor products of algebras), but it contains also as 
particular cases a number of independent and previously unrelated 
results from Hopf algebra theory. This result states that, if 
$A\ot _RB$ is a twisted tensor product and on the vector space $B$ we have 
one more algebra structure denoted by $B'$ and we have also two 
linear maps $\theta , \gamma :B\rightarrow A\ot B$ satisfying a set of 
conditions, then one can define a new map $R':B'\ot A\rightarrow A\ot B'$ 
by a certain formula, this map turns out to be a twisting map and we 
have an algebra isomorphism $A\ot _{R'}B'\simeq A\ot _RB$.  

On the other hand, there exist important examples of 
''products'' of ''algebras'' that are {\em not} twisted tensor products, 
a prominent example being the classical Hopf crossed product. 
A very general construction, generalizing both the Hopf 
crossed product and the twisted tensor product of algebras, was 
introduced by  Brzezi\'{n}ski in \cite{brz}. Given an algebra $A$, 
a vector space $V$ endowed with a distinguished element $1_V$ 
and two linear maps $\sigma :V\ot V\rightarrow A\ot V$ and 
$R:V\ot A\rightarrow A\ot V$ satisfying certain conditions, 
Brzezi\'{n}ski defined an (associative unital) algebra structure 
on $A\ot V$, which will be denoted in what follows by 
$A\ot _{R, \sigma }V$ and called a {\em crossed product}. 
A twisted tensor product of algebras $A\ot _RB$ is the 
crossed product $A\ot _{R, \sigma }B$, where $\sigma :B\ot B
\rightarrow A\ot B$ is given by $\sigma (b, b')=1_A\ot bb'$. 
Another example of a crossed product, not discussed so far 
in the literature on this subject but important for us in 
what follows, is a smash product $H\# B$ between a 
quasi-bialgebra $H$ and a right $H$-module algebra $B$  
(this is the right-handed version of the smash product introduced 
in \cite{bpv}; since in general $B$ is {\em not} associative, 
in general $H\# B$ is {\em not} a twisted tensor product of algebras).   

The aim of this paper is to prove a result of the type invariance under 
twisting for  Brzezi\'{n}ski's crossed products. This result arose as a 
common generalization of the 
invariance under twisting for twisted tensor products of algebras and 
the invariance under twisting of the quasi-Hopf smash product from 
\cite{bpvo}. Namely, if  $A\ot _{R, \sigma }V$ is a crossed product 
and $\theta , \gamma :V\rightarrow A\ot V$ are linear maps, 
we can define certain maps  $\sigma ':V\ot V\rightarrow A\ot V$ and 
$R':V\ot A\rightarrow A\ot V$ and if some conditions are 
satisfied then  $A\ot _{R', \sigma '}V$ is a crossed product, isomorphic 
to  $A\ot _{R, \sigma }V$. After proving this result we show that 
it contains indeed as particular cases not only the invariance under twisting 
for twisted tensor products of algebras and the invariance under 
twisting of the quasi-Hopf smash product, but also another 
unrelated result, namely the equivalence of crossed products by a coalgebra 
proved by  Brzezi\'{n}ski (which in turn generalizes the equivalence 
of Hopf crossed products).  
\section{Preliminaries}\selabel{1}
${\;\;\;\;}$
We work over a commutative field $k$. All algebras, linear spaces
etc. will be over $k$; unadorned $\ot $ means $\ot_k$. By ''algebra'' we 
always mean an associative unital algebra.

We recall from \cite{Cap}, \cite{VanDaele} that, given two algebras $A$, $B$ 
and a $k$-linear map $R:B\ot A\rightarrow A\ot B$, with notation 
$R(b\ot a)=a_R\ot b_R$, for $a\in A$, $b\in B$, satisfying the conditions 
$a_R\otimes 1_R=a\otimes 1$, $1_R\otimes b_R=1\otimes b$, 
$(aa')_R\otimes b_R=a_Ra'_r\otimes b_{R_r}$, 
$a_R\otimes (bb')_R=a_{R_r}\otimes b_rb'_R$, 
for all $a, a'\in A$ and $b, b'\in B$ (where $r$ is another 
copy of $R$), if we define on $A\ot B$ a new multiplication, by 
$(a\ot b)(a'\ot b')=aa'_R\ot b_Rb'$, then this multiplication is associative 
with unit $1\ot 1$. In this case, the map $R$ is called 
a {\bf twisting map} between $A$ and $B$ and the new algebra 
structure on $A\ot B$ is denoted by $A\ot _RB$ and called the 
{\bf twisted tensor product} of $A$ and $B$ afforded by $R$. 

We recall from \cite{brz} the construction of  
Brzezi\'{n}ski's crossed product:
\begin{proposition} (\cite{brz}) \label{defbrz}
Let $(A, \mu , 1_A)$ be an (associative unital) algebra and $V$ a 
vector space equipped with a distinguished element $1_V\in V$. Then 
the vector space $A\ot V$ is an associative algebra with unit $1_A\ot 1_V$ 
and whose multiplication has the property that $(a\ot 1_V)(b\ot v)=
ab\ot v$, for all $a, b\in A$ and $v\in V$, if and only if there exist 
linear maps $\sigma :V\ot V\rightarrow A\ot V$ and 
$R:V\ot A\rightarrow A\ot V$  satisfying the following conditions:
\begin{eqnarray}
&&R(1_V\ot a)=a\ot 1_V, \;\;\;R(v\ot 1_A)=1_A\ot v, \;\;\;\forall 
\;a\in A, \;v\in V, \label{brz1} \\
&&\sigma (1_V, v)=\sigma (v, 1_V)=1_A\ot v, \;\;\;\forall 
\;v\in V, \label{brz2} \\
&&R\circ (id_V\ot \mu )=(\mu \ot id_V)\circ (id_A\ot R)\circ (R\ot id_A), 
\label{brz3} \\
&&(\mu \ot id_V)\circ (id_A\ot \sigma )\circ (R\ot id_V)\circ 
(id_V\ot \sigma ) \nonumber \\
&&\;\;\;\;\;\;\;\;\;\;
=(\mu \ot id_V)\circ (id_A\ot \sigma )\circ (\sigma \ot id_V), \label{brz4} \\
&&(\mu \ot id_V)\circ (id_A\ot \sigma )\circ (R\ot id_V)\circ 
(id_V\ot R ) \nonumber \\
&&\;\;\;\;\;\;\;\;\;\;
=(\mu \ot id_V)\circ (id_A\ot R )\circ (\sigma \ot id_A). \label{brz5} 
\end{eqnarray}
If this is the case, the multiplication of $A\ot V$ is given explicitely by
\begin{eqnarray*} 
&&\mu _{A\ot V}=(\mu _2\ot id_V)\circ (id_A\ot id_A\ot \sigma )\circ 
(id_A\ot R\ot id_V),
\end{eqnarray*}
where $\mu _2=\mu \circ (id_A\ot \mu )=\mu \circ (\mu \ot id_A)$. 
We denote by $A\ot _{R, \sigma }V$ this algebra structure and 
call it the {\bf crossed product} afforded by the data $(A, V, R, \sigma )$.  
\end{proposition}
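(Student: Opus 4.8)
The plan is to prove both implications of the stated equivalence by reducing everything to the associativity constraint and the unit constraint for a putative multiplication on $A\ot V$.

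First I would fix notation. Write a general $k$-linear multiplication $\mu_{A\ot V}$ on $A\ot V$ and impose the required normalization property $(a\ot 1_V)(b\ot v)=ab\ot v$. The key observation is that, {\em assuming} this normalization, the whole multiplication is determined by its restriction to elements of the form $(1_A\ot v)(b\ot w)$: indeed $(a\ot v)(b\ot w)=(a\ot 1_V)\big[(1_A\ot v)(b\ot w)\big]$, and left multiplication by $a\ot 1_V$ acts as $\mu(a\ot -)$ on the first tensorand. So define $R:V\ot A\to A\ot V$ and $\sigma:V\ot V\to A\ot V$ by splitting $(1_A\ot v)(b\ot w)$ through the intermediate step $(1_A\ot v)(b\ot 1_V)\cdot(1_A\ot 1_V)(\ldots)$; concretely set $R(v\ot a)=(1_A\ot v)(a\ot 1_V)$ and $\sigma(v\ot w)=(1_A\ot v)(1_A\ot w)$. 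Then one checks the displayed formula $\mu_{A\ot V}=(\mu_2\ot id_V)\circ(id_A\ot id_A\ot\sigma)\circ(id_A\ot R\ot id_V)$ holds, by writing $(a\ot v)(b\ot w)$ as $(a\ot 1_V)\cdot(1_A\ot v)(b\ot 1_V)\cdot(1_A\ot 1_V)(1_A\ot w)$ and regrouping using associativity and normalization.

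Next, for the ''only if'' direction, I would feed the special instances of associativity into the definitions of $R$ and $\sigma$ to extract (\ref{brz1})--(\ref{brz5}). The unit axioms force (\ref{brz1}) and (\ref{brz2}) immediately: e.g. $R(1_V\ot a)=(1_A\ot 1_V)(a\ot 1_V)=a\ot 1_V$ and similarly for the others. Equation (\ref{brz3}) comes from associativity of the product $(1_A\ot v)(a\ot 1_V)(a'\ot 1_V)$ evaluated two ways; (\ref{brz4}) from associativity of $(1_A\ot v)(1_A\ot w)(1_A\ot t)$, i.e. the ''cocycle'' relation; and (\ref{brz5}) from associativity of $(1_A\ot v)(1_A\ot w)(a\ot 1_V)$, the compatibility of $\sigma$ with $R$. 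In each case the normalization is what lets one strip off the outer $a\ot 1_V$ and identify the resulting identity in $A\ot V$ with the stated composite of maps.

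For the ''if'' direction I would run the argument in reverse: define $\mu_{A\ot V}$ by the displayed formula, check $1_A\ot 1_V$ is a two-sided unit (using (\ref{brz1}) and (\ref{brz2})) and the normalization property (immediate from (\ref{brz1})), then verify associativity of a general triple product $(a\ot u)(b\ot v)(c\ot w)$. The main obstacle is this last associativity check: expanding both $((a\ot u)(b\ot v))(c\ot w)$ and $(a\ot u)((b\ot v)(c\ot w))$ via the formula produces long composites, and one must see that the difference is governed exactly by (\ref{brz3}), (\ref{brz4}), (\ref{brz5}) together with associativity of $\mu$ in $A$. The cleanest route is to first reduce to the case $a=b=c=1_A$ using normalization and the already-established identity $(a\ot 1_V)(x)=\mu(a\ot-)\otimes id$ applied to $x$, then handle the triple $(1_A\ot u)(1_A\ot v)(1_A\ot w)$ and the triple $(1_A\ot u)(1_A\ot v)(c\ot 1_V)$ and $(1_A\ot u)(b\ot 1_V)(c\ot 1_V)$ separately — these correspond precisely to (\ref{brz4}), (\ref{brz5}) and (\ref{brz3}) respectively — and then assemble the general case. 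This bookkeeping is routine but delicate; everything else is formal.
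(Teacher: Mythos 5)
The paper offers no proof of this proposition: it is recalled from Brzezi\'{n}ski's paper \cite{brz} as background, so there is nothing in the text to compare your argument against. Judged on its own, your outline is the standard argument and is essentially correct. Setting $R(v\ot a)=(1_A\ot v)(a\ot 1_V)$ and $\sigma (v\ot w)=(1_A\ot v)(1_A\ot w)$, the normalization $(a\ot 1_V)(b\ot v)=ab\ot v$ does force the displayed formula for $\mu _{A\ot V}$, the unit axioms give (\ref{brz1}) and (\ref{brz2}), and associativity evaluated on the three ``overlaps'' $(1\ot v,\,a\ot 1_V,\,a'\ot 1_V)$, $(1\ot x,\,1\ot y,\,1\ot z)$ and $(1\ot v,\,1\ot w,\,a\ot 1_V)$ yields precisely (\ref{brz3}), (\ref{brz4}) and (\ref{brz5}); I checked each of these identifications and they are right.

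The one step you should not wave through is the ``assemble the general case'' claim in the converse direction. The elements $a\ot 1_V$ and $1_A\ot v$ generate $A\ot V$ only multiplicatively, not linearly, and associativity of a bilinear product on triples drawn from a multiplicative generating set does not formally imply associativity on arbitrary triples. Concretely, to pass from $((1\ot u)(b\ot v))(c\ot w)=(1\ot u)((b\ot v)(c\ot w))$ to your three special triples you need, in addition, the straightening identity $(1\ot u)(ba'\ot p)=b_R\,a'_r\,\sigma _1(u_{R_r},p)\ot \sigma _2(u_{R_r},p)$, i.e.\ (\ref{brz3}) fed back into the defining formula so that a left factor of $A$ can be pulled out past $1\ot u$; and once this is written down one is in effect doing the direct expansion of both bracketings of $(a\ot u)(b\ot v)(c\ot w)$, which are matched by one application of (\ref{brz5}), one of (\ref{brz4}) and two of (\ref{brz3}). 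So the ingredients you list are exactly the right ones, but the final verification is an explicit (if routine) computation rather than a formal consequence of the three special cases; as stated, that last sentence of your proof is a gap that needs to be filled by carrying out the expansion.
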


If  $A\ot _{R, \sigma }V$ is a crossed product, we introduce the 
following Sweedler-type notation:
\begin{eqnarray*}
&&R:V\ot A\rightarrow A\ot V, \;\;\;R(v\ot a)=a_R\ot v_R, \\
&&\sigma :V\ot V\rightarrow A\ot V, \;\;\;\sigma (v, v')=\sigma _1(v, v') 
\ot \sigma _2(v, v'), 
\end{eqnarray*} 
for all $v, v'\in V$ and $a\in A$. With this notation, the multiplication of 
 $A\ot _{R, \sigma }V$ reads
\begin{eqnarray*}
&&(a\ot v)(a'\ot v')=aa'_R\sigma _1(v_R, v')\ot \sigma _2(v_R, v'), \;\;\;
\forall \;a, a'\in A, \;v, v'\in V.
\end{eqnarray*}

A twisted tensor product is a particular case of a crossed product 
(cf. \cite{guccione}), namely, if $A\ot _RB$ is a twisted tensor product of 
algebras then $A\ot _RB=A\ot _{R, \sigma }B$, where 
$\sigma :B\ot B\rightarrow A\ot B$ 
is given by $\sigma (b, b')=1_A\ot bb'$, for all $b, b'\in B$. 
\section{The main result and its consequences}
\setcounter{equation}{0}
${\;\;\;\;}$
We can state now the Invariance under twisting theorem for 
crossed products:
\begin{theorem} \label{main}
Let $A\ot _{R, \sigma }V$ be a crossed product and assume 
we are given two linear maps $\theta , \gamma :V\rightarrow A\ot V$, 
with notation $\theta (v)=v_{<-1>}\ot v_{<0>}$ and 
$\gamma (v)=v_{\{-1\}}\ot v_{\{0\}}$, for all $v\in V$. Define the maps 
$R':V\ot A\rightarrow A\ot V$ and $\sigma ':V\ot V\rightarrow A\ot V$ by 
the formulae
\begin{eqnarray*}
&&R'=(\mu _2\ot id_V)\circ (id_A\ot id_A\ot \gamma )\circ (id_A\ot R)\circ 
(\theta \ot id_A),   \\
&&\sigma '=(\mu \ot id_V)\circ (id_A\ot \gamma )\circ (\mu _2\ot id_V)
\circ (id_A\ot id_A\ot \sigma )\circ (id_A\ot R\ot id_V)\circ 
(\theta \ot \theta  ). 
\end{eqnarray*}
Assume that the following conditions are satisfied:
\begin{eqnarray}
&&\theta (1_V)=1_A\ot 1_V, \;\;\;\gamma (1_V)=1_A\ot 1_V, \label{cros1}\\
&&v_{<-1>}v_{<0>_{\{-1\}}}\ot v_{<0>_{\{0\}}}=1_A\ot v, 
\;\;\;\forall \;v\in V, \label{cros2}\\
&&v_{\{-1\}}v_{\{0\}_{<-1>}}\ot v_{\{0\}_{<0>}}=1_A\ot v, 
\;\;\;\forall \;v\in V, \label{cros3}\\
&&(\mu \ot id_V)\circ (\mu \ot \sigma ')\circ (id_A\ot \gamma \ot id_V)
\circ (R\ot id_V)\circ (id_V\ot \gamma )\nonumber \\
&&\;\;\;\;\;\;\;\;\;\;=(\mu \ot id_V)\circ (id_A\ot \gamma )\circ \sigma . 
\label{cros4}
\end{eqnarray}
Then  $A\ot _{R', \sigma '}V$ is a crossed product and we have an algebra 
isomorphism   $A\ot _{R', \sigma '}V\simeq A\ot _{R, \sigma }V$, 
$a\ot v\mapsto av_{<-1>}\ot v_{<0>}$. 
\end{theorem}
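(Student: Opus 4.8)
The plan is to define the candidate isomorphism $\phi:A\ot V\rightarrow A\ot V$ by $\phi(a\ot v)=av_{<-1>}\ot v_{<0>}$ and to \emph{transport} the known crossed product structure of $A\ot _{R,\sigma}V$ across $\phi$. That is, rather than verifying the Brzezi\'{n}ski axioms \eqref{brz1}--\eqref{brz5} for $(R',\sigma')$ directly, I would first check that $\phi$ is bijective, then define a multiplication on $A\ot V$ by $x\cdot' y:=\phi^{-1}(\phi(x)\phi(y))$ where the product on the right is that of $A\ot _{R,\sigma}V$, and finally show that this transported multiplication coincides with the would-be crossed product multiplication $\mu_{A\ot V}$ built from the data $(A,V,R',\sigma')$ via the explicit formula in Proposition~\ref{defbrz}. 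Once that identification is in place, associativity and unitality of $\cdot'$ are automatic (they are inherited from $A\ot _{R,\sigma}V$ through the bijection $\phi$), and the ''only if'' direction of Proposition~\ref{defbrz} then \emph{forces} $(R',\sigma')$ to satisfy \eqref{brz1}--\eqref{brz5}, i.e.\ $A\ot _{R',\sigma'}V$ is a genuine crossed product, with $\phi$ an algebra isomorphism by construction.

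The first technical point is the bijectivity of $\phi$. Here conditions \eqref{cros2} and \eqref{cros3} are exactly what is needed: the map $\psi(a\ot v)=av_{\{-1\}}\ot v_{\{0\}}$ is a two-sided inverse, since $\psi(\phi(a\ot v))=av_{<-1>}v_{<0>_{\{-1\}}}\ot v_{<0>_{\{0\}}}=a(1_A\ot v)=a\ot v$ by \eqref{cros2}, and symmetrically $\phi\circ\psi=id$ by \eqref{cros3} — one must be slightly careful that $\theta$ and $\gamma$ land in $A\ot V$ and the $A$-component multiplies into the left leg, but this is the natural reading. Note $\phi$ and $\psi$ are left $A$-linear, so it suffices to track what happens on elements $1_A\ot v$. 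The element $1_A\ot 1_V$ is fixed by $\phi$ thanks to \eqref{cros1}, which takes care of the unit.

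The main work, and the expected obstacle, is the computation that the transported multiplication equals the crossed product multiplication associated to $(R',\sigma')$. Concretely one must show, for all $a,a'\in A$ and $v,v'\in V$, that
\begin{eqnarray*}
&&\phi\bigl((a\ot v)\cdot'(a'\ot v')\bigr)=\phi(a\ot v)\,\phi(a'\ot v')\ \text{ in }\ A\ot _{R,\sigma}V,
\end{eqnarray*}
where the left side is expanded using the definitions of $R'$ and $\sigma'$. Unwinding $R'$ and $\sigma'$, the left-hand side is a long string involving $\theta$ applied to both $v$ and $v'$, one copy of $R$, one copy of $\sigma$, a copy of $\gamma$, and several multiplications; the right-hand side is $av_{<-1>}\ot v_{<0>}$ times $a'v'_{<-1>}\ot v'_{<0>}$ in the original crossed product, which expands via $(x\ot u)(y\ot w)=xy_R\sigma_1(u_R,w)\ot\sigma_2(u_R,w)$. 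Matching the two requires pushing the stray $\gamma$ past an $R$ and a $\sigma$; this is precisely where hypothesis \eqref{cros4} — which relates $\gamma$, $R$, $\sigma$ and $\sigma'$ — is consumed, together with the original axioms \eqref{brz3}--\eqref{brz5} for $(R,\sigma)$ and the associativity/normalization already available. I would organize this as: (i) first treat the ''$R'$ part'', i.e.\ products of the form $(1_A\ot v)(a'\ot 1_V)$, using \eqref{brz3} and \eqref{cros3}; (ii) then the ''$\sigma'$ part'', products $(1_A\ot v)(1_A\ot v')$, using \eqref{brz4}, \eqref{brz5} and crucially \eqref{cros4}; (iii) assemble the general case by left $A$-linearity and the factorization property $(a\ot 1_V)(b\ot v)=ab\ot v$ which holds in any crossed product. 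The bookkeeping with the Sweedler-type indices for $R$, $\sigma$, $\theta$, $\gamma$ is the only real danger; conceptually the proof is just ''conjugate the structure by $\phi$ and read off the axioms from the characterization in Proposition~\ref{defbrz}''.
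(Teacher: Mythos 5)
Your proposal is correct, and it takes a genuinely different route from the paper's. The paper verifies the five axioms (\ref{brz1})--(\ref{brz5}) for $(R',\sigma ')$ head-on, by long Sweedler-notation computations consuming (\ref{cros3}), (\ref{cros4}), (\ref{brz4'}) and (\ref{brz5'}), and only afterwards checks that $\varphi $ is a bijective algebra map. Your transport-of-structure argument inverts the order: once one notes that $\psi :=(\mu \ot id_V)\circ (id_A\ot \gamma )$, $\psi (a\ot v)=av_{\{-1\}}\ot v_{\{0\}}$, is a two-sided inverse of $\phi $ by (\ref{cros2})--(\ref{cros3}), and that by their very definitions $R'(v\ot a)=\phi ^{-1}\bigl(\phi (1_A\ot v)\,\phi (a\ot 1_V)\bigr)$ and $\sigma '(v,w)=\phi ^{-1}\bigl(\phi (1_A\ot v)\,\phi (1_A\ot w)\bigr)$ (indeed $R'=\psi \circ \mu _{A\ot _{R,\sigma }V}\circ (\theta \ot id_A\ot 1_V)$ and $\sigma '=\psi \circ \mu _{A\ot _{R,\sigma }V}\circ (\theta \ot \theta )$, using (\ref{cros1}), (\ref{brz1}), (\ref{brz2})), the rest is formal: the transported product is associative with unit $1_A\ot 1_V$, satisfies $(a\ot 1_V)\cdot '(b\ot v)=ab\ot v$ by (\ref{cros1}), and the ``only if'' direction of Proposition \ref{defbrz} --- together with the standard fact that $R$ and $\sigma $ are always recovered from the multiplication as $(1_A\ot v)(a\ot 1_V)$ and $(1_A\ot v)(1_A\ot v')$ --- forces $(R',\sigma ')$ to satisfy (\ref{brz1})--(\ref{brz5}), with $\phi $ an isomorphism by construction. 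This is shorter and more conceptual. One correction, though: you have mislocated where the hypotheses enter. Your step (ii) is not where (\ref{brz4}), (\ref{brz5}) and (\ref{cros4}) are consumed; it is essentially a tautology, the definition of $\sigma '$ unwound. Carried out as you describe, the argument never uses (\ref{cros4}) at all --- the only nontrivial identity, multiplicativity of $\phi $ on general elements, needs just (\ref{cros3}) and (\ref{brz3}) (this is precisely the computation the paper performs at the very end of its proof). So your route in fact proves the statement from (\ref{cros1})--(\ref{cros3}) alone, with (\ref{cros4}) then holding automatically; that condition really earns its keep only in the applications, where one wants $\sigma '$ to coincide with a prescribed map (e.g.\ $\sigma '(b,b')=1_A\ot b*b'$ in Theorem \ref{invttp}).
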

\begin{proof}
Let us note first that, in Sweedler-type notation, the maps $R'$ and 
$\sigma '$ are given by 
\begin{eqnarray*}
&&R'(v\ot a)=v_{<-1>}a_Rv_{<0>_{R_{\{-1\}}}}\ot 
v_{<0>_{R_{\{0\}}}}, \\
&&\sigma '(v, w)=v_{<-1>}w_{<-1>_R}\sigma _1(v_{<0>_R}, 
w_{<0>})\sigma _2(v_{<0>_R}, w_{<0>})_{\{-1\}}\ot 
\sigma _2(v_{<0>_R}, w_{<0>})_{\{0\}}.
\end{eqnarray*} 
We need to prove that the maps $R'$ and $\sigma '$ satisfy the 
conditions (\ref{brz1})--(\ref{brz5}). The conditions 
(\ref{brz1}) and (\ref{brz2}) are very easy to prove and are 
left to the reader, so we concentrate on 
(\ref{brz3})--(\ref{brz5}). We denote as usual by $R=r={\cal R}=
\overline{R}$ some more copies of $R$.\\
\underline{Proof of  (\ref{brz3})}: \\[2mm]
${\;\;\;\;\;}$$(\mu \ot id_V)\circ (id_A\ot R')\circ (R'\ot id_A)(v\ot a\ot a')$
\begin{eqnarray*}
&=&(\mu \ot id_V)\circ (id_A\ot R')(v_{<-1>}a_Rv_{<0>_{R_{\{-1\}}}}\ot 
v_{<0>_{R_{\{0\}}}}\ot a')\\
&=&v_{<-1>}a_Rv_{<0>_{R_{\{-1\}}}} 
v_{<0>_{R_{\{0\}_{<-1>}}}}a'_rv_{<0>_{R_{\{0\}_{<0>_{r_{\{-1\}}}}}}}
\ot v_{<0>_{R_{\{0\}_{<0>_{r_{\{0\}}}}}}}\\
&\overset{(\ref{cros3})}{=}&v_{<-1>}a_Ra'_r
v_{<0>_{R_{r_{\{-1\}}}}}\ot v_{<0>_{R_{r_{\{0\}}}}}\\
&\overset{(\ref{brz3})}{=}&v_{<-1>}(aa')_R
v_{<0>_{R_{\{-1\}}}}\ot v_{<0>_{R_{\{0\}}}}\\
&=&R'\circ (id_V\ot \mu )(v\ot a\ot a'), \;\;\;q.e.d.
\end{eqnarray*}
\underline{Proof of  (\ref{brz4})}: \\[2mm]
Note first that (\ref{brz4}) and (\ref{brz5}) for $R$, $\sigma $ 
may be written 
in Sweedler-type notation as 
\begin{eqnarray}
&&\sigma _1(y, z)_R\sigma _1(x_R, \sigma _2(y, z))\ot 
\sigma _2(x_R, \sigma _2(y, z)) \nonumber \\
&&\;\;\;\;\;\;\;\;\;\;=\sigma _1(x, y)\sigma _1(\sigma _2(x, y), z)\ot 
\sigma _2(\sigma _2(x, y), z), \label{brz4'}\\
&&a_{R_r}\sigma _1(v_r, w_R)\ot \sigma _2(v_r, w_R)
=\sigma _1(v, w)a_R\ot \sigma _2(v, w)_R, \label{brz5'}
\end{eqnarray}
for all $a\in A$ and $x, y, z, v, w\in V$. Now we compute:\\[2mm]
${\;\;\;\;\;}$$(\mu \ot id_V)\circ (id_A\ot \sigma ' )\circ (R'\ot id_V)\circ 
(id_V\ot \sigma ')(x\ot y\ot z)$
\begin{eqnarray*}
&=&(\mu \ot id_V)\circ (id_A\ot \sigma ' )\circ (R'\ot id_V)
(x\ot y_{<-1>}z_{<-1>_R}\sigma _1(y_{<0>_R}, 
z_{<0>})\\
&&\sigma _2(y_{<0>_R}, z_{<0>})_{\{-1\}}\ot 
\sigma _2(y_{<0>_R}, z_{<0>})_{\{0\}})\\
&=&(\mu \ot id_V)\circ (id_A\ot \sigma ' )
(x_{<-1>}[y_{<-1>}z_{<-1>_R}\sigma _1(y_{<0>_R}, 
z_{<0>})\sigma _2(y_{<0>_R}, z_{<0>})_{\{-1\}}]_r\\
&&x_{<0>_{r_{\{-1\}}}}\ot x_{<0>_{r_{\{0\}}}}
\ot \sigma _2(y_{<0>_R}, z_{<0>})_{\{0\}})\\
&=&x_{<-1>}[y_{<-1>}z_{<-1>_R}\sigma _1(y_{<0>_R}, 
z_{<0>})\sigma _2(y_{<0>_R}, z_{<0>})_{\{-1\}}]_r
x_{<0>_{r_{\{-1\}}}}x_{<0>_{r_{\{0\}_{<-1>}}}}\\
&&\sigma _2(y_{<0>_R}, z_{<0>})_{\{0\}_{<-1>_{\cal R}}}
\sigma _1(x_{<0>_{r_{\{0\}_{<0>_{\cal R}}}}}, 
\sigma _2(y_{<0>_R}, z_{<0>})_{\{0\}_{<0>}})\\
&&\sigma _2(x_{<0>_{r_{\{0\}_{<0>_{\cal R}}}}}, 
\sigma _2(y_{<0>_R}, z_{<0>})_{\{0\}_{<0>}})_{\{-1\}}\\
&&\ot 
\sigma _2(x_{<0>_{r_{\{0\}_{<0>_{\cal R}}}}}, 
\sigma _2(y_{<0>_R}, z_{<0>})_{\{0\}_{<0>}})_{\{0\}}\\
&\overset{(\ref{cros3})}{=}&x_{<-1>}[y_{<-1>}z_{<-1>_R}
\sigma _1(y_{<0>_R}, 
z_{<0>})\sigma _2(y_{<0>_R}, z_{<0>})_{\{-1\}}]_r\\
&&\sigma _2(y_{<0>_R}, z_{<0>})_{\{0\}_{<-1>_{\cal R}}}
\sigma _1(x_{<0>_{r_{\cal R}}}, 
\sigma _2(y_{<0>_R}, z_{<0>})_{\{0\}_{<0>}})\\
&&\sigma _2(x_{<0>_{r_{\cal R}}}, 
\sigma _2(y_{<0>_R}, z_{<0>})_{\{0\}_{<0>}})_{\{-1\}}\ot 
\sigma _2(x_{<0>_{r_{\cal R}}}, 
\sigma _2(y_{<0>_R}, z_{<0>})_{\{0\}_{<0>}})_{\{0\}}\\
&\overset{(\ref{brz3})}{=}&x_{<-1>}[y_{<-1>}z_{<-1>_R}
\sigma _1(y_{<0>_R}, z_{<0>})]_{\overline{R}}
\sigma _2(y_{<0>_R}, z_{<0>})_{\{-1\}_r}\\
&&\sigma _2(y_{<0>_R}, z_{<0>})_{\{0\}_{<-1>_{\cal R}}}
\sigma _1(x_{<0>_{\overline{R}_{r_{\cal R}}}}, 
\sigma _2(y_{<0>_R}, z_{<0>})_{\{0\}_{<0>}})\\
&&\sigma _2(x_{<0>_{\overline{R}_{r_{\cal R}}}}, 
\sigma _2(y_{<0>_R}, z_{<0>})_{\{0\}_{<0>}})_{\{-1\}}\ot 
\sigma _2(x_{<0>_{\overline{R}_{r_{\cal R}}}}, 
\sigma _2(y_{<0>_R}, z_{<0>})_{\{0\}_{<0>}})_{\{0\}}\\
&\overset{(\ref{brz3})}{=}&x_{<-1>}[y_{<-1>}z_{<-1>_R}
\sigma _1(y_{<0>_R}, z_{<0>})]_{\overline{R}}\\
&&[\sigma _2(y_{<0>_R}, z_{<0>})_{\{-1\}}
\sigma _2(y_{<0>_R}, z_{<0>})_{\{0\}_{<-1>}}]_r\\
&&\sigma _1(x_{<0>_{\overline{R}_r}}, 
\sigma _2(y_{<0>_R}, z_{<0>})_{\{0\}_{<0>}})
\sigma _2(x_{<0>_{\overline{R}_r}}, 
\sigma _2(y_{<0>_R}, z_{<0>})_{\{0\}_{<0>}})_{\{-1\}}\\
&&\ot 
\sigma _2(x_{<0>_{\overline{R}_r}}, 
\sigma _2(y_{<0>_R}, z_{<0>})_{\{0\}_{<0>}})_{\{0\}}\\
&\overset{(\ref{cros3}), (\ref{brz1})}{=}&
x_{<-1>}[y_{<-1>}z_{<-1>_R}
\sigma _1(y_{<0>_R}, z_{<0>})]_{\overline{R}}
\sigma _1(x_{<0>_{\overline{R}}}, 
\sigma _2(y_{<0>_R}, z_{<0>}))\\
&&\sigma _2(x_{<0>_{\overline{R}}}, 
\sigma _2(y_{<0>_R}, z_{<0>}))_{\{-1\}}\ot 
\sigma _2(x_{<0>_{\overline{R}}}, 
\sigma _2(y_{<0>_R}, z_{<0>}))_{\{0\}}\\
&\overset{(\ref{brz3})}{=}&
x_{<-1>}y_{<-1>_{\overline{R}}}z_{<-1>_{R_r}}
\sigma _1(y_{<0>_R}, z_{<0>})_{\cal R}
\sigma _1(x_{<0>_{\overline{R}_{r_{\cal R}}}}, 
\sigma _2(y_{<0>_R}, z_{<0>}))\\
&&\sigma _2(x_{<0>_{\overline{R}_{r_{\cal R}}}}, 
\sigma _2(y_{<0>_R}, z_{<0>}))_{\{-1\}}\ot 
\sigma _2(x_{<0>_{\overline{R}_{r_{\cal R}}}}, 
\sigma _2(y_{<0>_R}, z_{<0>}))_{\{0\}}\\
&\overset{(\ref{brz4'})}{=}&
x_{<-1>}y_{<-1>_{\overline{R}}}z_{<-1>_{R_r}}
\sigma _1(x_{<0>_{\overline{R}_r}}, y_{<0>_R})
\sigma _1(\sigma _2(x_{<0>_{\overline{R}_r}}, y_{<0>_R}), 
z_{<0>})\\
&&\sigma _2(\sigma _2(x_{<0>_{\overline{R}_r}}, y_{<0>_R}), 
z_{<0>})_{\{-1\}}
\ot \sigma _2(\sigma _2(x_{<0>_{\overline{R}_r}}, y_{<0>_R}), 
z_{<0>})_{\{0\}},
\end{eqnarray*}
${\;\;\;\;\;}$$(\mu \ot id_V)\circ (id_A\ot \sigma ')\circ 
(\sigma '\ot id_V)(x\ot y\ot z)$
\begin{eqnarray*}
&=&(\mu \ot id_V)\circ (id_A\ot \sigma ')
(x_{<-1>}y_{<-1>_{\overline{R}}}\sigma _1(x_{<0>_{\overline{R}}}, 
y_{<0>})\sigma _2(x_{<0>_{\overline{R}}}, y_{<0>})_{\{-1\}}\\
&&\ot 
\sigma _2(x_{<0>_{\overline{R}}}, y_{<0>})_{\{0\}}\ot z)\\
&=&x_{<-1>}y_{<-1>_{\overline{R}}}\sigma _1(x_{<0>_{\overline{R}}}, 
y_{<0>})\sigma _2(x_{<0>_{\overline{R}}}, y_{<0>})_{\{-1\}}
\sigma _2(x_{<0>_{\overline{R}}}, y_{<0>})_{\{0\}_{<-1>}}\\
&&z_{<-1>_R}\sigma _1(\sigma _2(x_{<0>_{\overline{R}}}, y_{<0>})
_{\{0\}_{<0>_R}}, z_{<0>})
\sigma _2(\sigma _2(x_{<0>_{\overline{R}}}, y_{<0>})
_{\{0\}_{<0>_R}}, z_{<0>})_{\{-1\}}\\
&&\ot \sigma _2(\sigma _2(x_{<0>_{\overline{R}}}, y_{<0>})
_{\{0\}_{<0>_R}}, z_{<0>})_{\{0\}}\\
&\overset{(\ref{cros3})}{=}&
x_{<-1>}y_{<-1>_{\overline{R}}}\sigma _1(x_{<0>_{\overline{R}}}, 
y_{<0>})z_{<-1>_R}\sigma _1
(\sigma _2(x_{<0>_{\overline{R}}}, y_{<0>})
_R, z_{<0>})\\
&&\sigma _2(\sigma _2(x_{<0>_{\overline{R}}}, y_{<0>})
_R, z_{<0>})_{\{-1\}}
\ot \sigma _2(\sigma _2(x_{<0>_{\overline{R}}}, y_{<0>})
_R, z_{<0>})_{\{0\}}\\
&\overset{(\ref{brz5'})}{=}&
x_{<-1>}y_{<-1>_{\overline{R}}}z_{<-1>_{R_r}}
\sigma _1(x_{<0>_{\overline{R}_r}}, y_{<0>_R})
\sigma _1(\sigma _2(x_{<0>_{\overline{R}_r}}, y_{<0>_R}), 
z_{<0>})\\
&&\sigma _2(\sigma _2(x_{<0>_{\overline{R}_r}}, y_{<0>_R}), 
z_{<0>})_{\{-1\}}
\ot \sigma _2(\sigma _2(x_{<0>_{\overline{R}_r}}, y_{<0>_R}), 
z_{<0>})_{\{0\}},
\end{eqnarray*}
and we see that the two terms coincide.\\
\underline{Proof of  (\ref{brz5})}: \\[2mm]
Let $v, w\in V$ and $a\in A$; we compute:\\[2mm]
${\;\;\;\;\;}$$(\mu \ot id_V)\circ (id_A\ot \sigma ')\circ (R'\ot id_V)\circ 
(id_V\ot R') (v\ot w\ot a)$
\begin{eqnarray*}
&=&(\mu \ot id_V)\circ (id_A\ot \sigma ')\circ (R'\ot id_V)
(v\ot w_{<-1>}a_rw_{<0>_{r_{\{-1\}}}}\ot 
w_{<0>_{r_{\{0\}}}})\\
&=&(\mu \ot id_V)\circ (id_A\ot \sigma ')(v_{<-1>}
(w_{<-1>}a_rw_{<0>_{r_{\{-1\}}}})_Rv_{<0>_{R_{\{-1\}}}}
\ot v_{<0>_{R_{\{0\}}}}\\
&&\ot  w_{<0>_{r_{\{0\}}}})\\
&=&v_{<-1>}
(w_{<-1>}a_rw_{<0>_{r_{\{-1\}}}})_Rv_{<0>_{R_{\{-1\}}}}
\sigma '_1(v_{<0>_{R_{\{0\}}}}, w_{<0>_{r_{\{0\}}}})\\
&&\ot \sigma '_2(v_{<0>_{R_{\{0\}}}}, w_{<0>_{r_{\{0\}}}})\\
&\overset{(\ref{brz3})}{=}&
v_{<-1>}
w_{<-1>_R}a_{r_{\cal R}}w_{<0>_{r_{\{-1\}_{\overline{R}}}}}
v_{<0>_{R_{{\cal R}_{\overline{R}_{\{-1\}}}}}}
\sigma '_1(v_{<0>_{R_{{\cal R}_{\overline{R}_{\{0\}}}}}}, 
w_{<0>_{r_{\{0\}}}})\\
&&\ot \sigma '_2(v_{<0>_{R_{{\cal R}_{\overline{R}_{\{0\}}}}}}, 
w_{<0>_{r_{\{0\}}}})\\
&\overset{(\ref{cros4})}{=}&
v_{<-1>}w_{<-1>_R}a_{r_{\cal R}}
\sigma _1(v_{<0>_{R_{\cal R}}}, w_{<0>_r})
\sigma _2(v_{<0>_{R_{\cal R}}}, w_{<0>_r})_{\{-1\}}\\
&&\ot \sigma _2(v_{<0>_{R_{\cal R}}}, w_{<0>_r})_{\{0\}}, 
\end{eqnarray*}
${\;\;\;\;\;}$$(\mu \ot id_V)\circ (id_A\ot R')\circ (\sigma '\ot id_A)
(v\ot w\ot a)$
\begin{eqnarray*}
&=&(\mu \ot id_V)\circ (id_A\ot R')(v_{<-1>}w_{<-1>_R}
\sigma _1(v_{<0>_R}, 
w_{<0>})\sigma _2(v_{<0>_R}, w_{<0>})_{\{-1\}}\\
&&\ot 
\sigma _2(v_{<0>_R}, w_{<0>})_{\{0\}}\ot a)\\
&=&v_{<-1>}w_{<-1>_R}
\sigma _1(v_{<0>_R}, 
w_{<0>})\sigma _2(v_{<0>_R}, w_{<0>})_{\{-1\}}
\sigma _2(v_{<0>_R}, w_{<0>})_{\{0\}_{<-1>}}\\
&&a_r\sigma _2(v_{<0>_R}, w_{<0>})_{\{0\}_{<0>_{r_{\{-1\}}}}}\ot 
\sigma _2(v_{<0>_R}, w_{<0>})_{\{0\}_{<0>_{r_{\{0\}}}}}\\
&\overset{(\ref{cros3})}{=}&
v_{<-1>}w_{<-1>_R}\sigma _1(v_{<0>_R}, w_{<0>})
a_r\sigma _2(v_{<0>_R}, w_{<0>})_{r_{\{-1\}}}\ot 
\sigma _2(v_{<0>_R}, w_{<0>})_{r_{\{0\}}}\\
&\overset{(\ref{brz5'})}{=}&
v_{<-1>}w_{<-1>_R}a_{r_{\cal R}}
\sigma _1(v_{<0>_{R_{\cal R}}}, w_{<0>_r})
\sigma _2(v_{<0>_{R_{\cal R}}}, w_{<0>_r})_{\{-1\}}\\
&&\ot \sigma _2(v_{<0>_{R_{\cal R}}}, w_{<0>_r})_{\{0\}},
\end{eqnarray*}
and we see that the two terms coincide. Thus, 
$A\ot _{R', \sigma '}V$ is indeed a crossed product. 

We prove now that the map $\varphi :A\ot _{R', \sigma '}V
\rightarrow A\ot _{R, \sigma }V$, $\varphi (a\ot v)=
av_{<-1>}\ot v_{<0>}$, is an algebra isomorphism. 
First, using (\ref{cros2}) and (\ref{cros3}), it is easy to see that 
$\varphi $ is bijective, with inverse given by 
$a\ot v\mapsto av_{\{-1\}}\ot v_{\{0\}}$. It is 
obvious that $\varphi (1\ot 1)=1\ot 1$, so we 
only have to prove that $\varphi$ is multiplicative. We compute: 
\begin{eqnarray*}
\varphi ((a\ot v)(a'\ot v'))&=&
\varphi (aa'_{R'}\sigma '_1(v_{R'}, v')\ot \sigma '_2(v_{R'}, v')\\
&=&\varphi (av_{<-1>}a'_Rv_{<0>_{R_{\{-1\}}}}
\sigma '_1(v_{<0>_{R_{\{0\}}}}, v')\ot 
\sigma '_2(v_{<0>_{R_{\{0\}}}}, v'))\\
&=&\varphi (av_{<-1>}a'_Rv_{<0>_{R_{\{-1\}}}}
v_{<0>_{R_{\{0\}_{<-1>}}}}v'_{<-1>_r}
\sigma _1(v_{<0>_{R_{\{0\}_{<0>_r}}}}, v'_{<0>})\\
&&\sigma _2(v_{<0>_{R_{\{0\}_{<0>_r}}}}, v'_{<0>})_{\{-1\}}
\ot \sigma _2(v_{<0>_{R_{\{0\}_{<0>_r}}}}, v'_{<0>})_{\{0\}})\\
&\overset{(\ref{cros3})}{=}&
\varphi (av_{<-1>}a'_Rv'_{<-1>_r}
\sigma _1(v_{<0>_{R_r}}, v'_{<0>})\\
&&\sigma _2(v_{<0>_{R_r}}, v'_{<0>})_{\{-1\}}
\ot \sigma _2(v_{<0>_{R_r}}, v'_{<0>})_{\{0\}})\\
&=&av_{<-1>}a'_Rv'_{<-1>_r}
\sigma _1(v_{<0>_{R_r}}, v'_{<0>})
\sigma _2(v_{<0>_{R_r}}, v'_{<0>})_{\{-1\}}\\
&&\sigma _2(v_{<0>_{R_r}}, v'_{<0>})_{\{0\}_{<-1>}}
\ot \sigma _2(v_{<0>_{R_r}}, v'_{<0>})_{\{0\}_{<0>}}\\
&\overset{(\ref{cros3})}{=}&
av_{<-1>}a'_Rv'_{<-1>_r}
\sigma _1(v_{<0>_{R_r}}, v'_{<0>})\ot 
\sigma _2(v_{<0>_{R_r}}, v'_{<0>})\\
&\overset{(\ref{brz3})}{=}&
av_{<-1>}(a'v'_{<-1>})_R
\sigma _1(v_{<0>_R}, v'_{<0>})\ot 
\sigma _2(v_{<0>_R}, v'_{<0>})\\
&=&(av_{<-1>}\ot v_{<0>})(a'v'_{<-1>}\ot v'_{<0>})\\
&=&\varphi (a\ot v)\varphi (a'\ot v'), 
\end{eqnarray*}
finishing the proof.
\end{proof}

We explain now that Theorem \ref{main} generalizes indeed the 
Invariance under twisting for twisted tensor products 
of algebras proved in Theorem 4.8 in \cite{jlpvo}, which we first recall: 
\begin{theorem} (\cite{jlpvo}) \label{invttp}
Let $A\ot _RB$ be a twisted tensor product of algebras and denote
the multiplication of $B$ by $b\ot b'\mapsto bb'$. Assume that on
the vector space $B$ we have one more algebra structure, denoted by
$B'$, with the same unit as $B$ and multiplication denoted by $b\ot
b'\mapsto b*b'$. Assume that we are given two linear maps
$\theta ,\gamma :B\rightarrow A\ot B$, with notation $\theta
(b)=b_{<-1>}\ot b_{<0>}$ and $\gamma (b)=b_{\{-1\}}\ot b_{\{0\}}$,
such that $\theta $ is an algebra map from $B'$ to $A\ot _RB$,
$\gamma (1)=1\ot 1$ and the following relations 
are satisfied, for all $b, b'\in B$:
\begin{eqnarray}
&&\gamma (bb')=b'_{\{-1\}_R}b_{R_{\{-1\}}}\ot
b_{R_{\{0\}}}*b'_{\{0\}},
\label{rel1} \\
&&b_{<-1>}b_{<0>_{\{-1\}}}\ot b_{<0>_{\{0\}}}=1\ot b, \label{rel2}\\
&&b_{\{-1\}}b_{\{0\}_{<-1>}}\ot b_{\{0\}_{<0>}}=1\ot b. \label{rel3}
\end{eqnarray}
Then the map $R':B'\ot A\rightarrow A\ot B'$, $R'(b\ot a)=b_{<-1>}a_R
b_{<0>_{R_{\{-1\}}}}\ot b_{<0>_{R_{\{0\}}}}$, 
is a twisting map and we have an algebra isomorphism
$A\ot _{R'}B'\simeq A\ot _RB, \;\;a\ot b\mapsto ab_{<-1>}\ot
b_{<0>}$.
\end{theorem}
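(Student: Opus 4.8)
The plan is to deduce the statement directly from Theorem~\ref{main}, exploiting the identification recalled in \seref{1}: the twisted tensor product $A\ot_RB$ is nothing but the crossed product $A\ot_{R,\sigma}B$ whose cocycle is $\sigma:B\ot B\ra A\ot B$, $\sigma(b,b')=1_A\ot bb'$. Accordingly I would take $V=B$ with this $\sigma$ and feed the given maps $\theta,\gamma:B\ra A\ot B$ into Theorem~\ref{main}. The whole task then reduces to two things: checking that the hypotheses above (that $\theta$ is an algebra map $B'\ra A\ot_RB$, that $\gamma(1)=1\ot 1$, and the relations (\ref{rel1})--(\ref{rel3})) force the hypotheses (\ref{cros1})--(\ref{cros4}) of Theorem~\ref{main}, and that the maps $R'$ and $\sigma'$ manufactured by Theorem~\ref{main} take the expected shape. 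Note in particular that the algebra-map hypothesis on $\theta$, which is present here but absent from Theorem~\ref{main}, is exactly what will pin $\sigma'$ down to the cocycle of a twisted tensor product.

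I would first dispose of the routine identifications. The formula for $R'$ in Theorem~\ref{main} does not involve $\sigma$, so it coincides verbatim with the map $R'$ claimed here; similarly the isomorphism $a\ot v\mapsto av_{<-1>}\ot v_{<0>}$ is literally the one in the conclusion. Condition (\ref{cros1}) holds because $\theta$, being an algebra map $B'\ra A\ot_RB$, sends the unit $1$ of $B'$ to $1_A\ot 1$, while $\gamma(1)=1\ot 1$ is assumed; and (\ref{cros2}), (\ref{cros3}) are exactly the relations (\ref{rel2}), (\ref{rel3}).

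The substantive point is to show that for this choice of $\sigma$ the map $\sigma'$ produced by Theorem~\ref{main} is $\sigma'(v,w)=1_A\ot v*w$. Substituting $\sigma_1(b,b')=1_A$ and $\sigma_2(b,b')=bb'$ into the Sweedler formula for $\sigma'$ collapses it to $v_{<-1>}w_{<-1>_R}(v_{<0>_R}w_{<0>})_{\{-1\}}\ot(v_{<0>_R}w_{<0>})_{\{0\}}$. Here I would recognize $v_{<-1>}w_{<-1>_R}\ot v_{<0>_R}w_{<0>}$ as the product $\theta(v)\theta(w)$ computed in $A\ot_RB$; since $\theta$ is an algebra map this equals $\theta(v*w)=(v*w)_{<-1>}\ot(v*w)_{<0>}$, so applying $\gamma$ to the second tensorand rewrites $\sigma'(v,w)$ as $(v*w)_{<-1>}(v*w)_{<0>_{\{-1\}}}\ot(v*w)_{<0>_{\{0\}}}$, which is $1_A\ot v*w$ by (\ref{rel2}) applied to the element $v*w$. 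This interplay of the algebra-map hypothesis on $\theta$ with relation (\ref{rel2}) is where I expect the real content of the argument to lie. Once $\sigma'(v,w)=1_A\ot v*w$ is known, the identification recalled in \seref{1} shows that $A\ot_{R',\sigma'}B$ is the twisted tensor product $A\ot_{R'}B'$ and, in particular, that $R'$ is a twisting map.

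It remains to verify (\ref{cros4}), which I would do using the simplified form of $\sigma'$ just obtained; no circularity arises, since that simplification used only the algebra-map property and (\ref{rel2}), not (\ref{cros4}) itself. For $\sigma(b,b')=1_A\ot bb'$ the right-hand side of (\ref{cros4}) evaluates to $\gamma(vw)$, while the left-hand side, after inserting $\sigma'_1=1_A$ and $\sigma'_2(b,b')=b*b'$, collapses to $w_{\{-1\}_R}v_{R_{\{-1\}}}\ot v_{R_{\{0\}}}*w_{\{0\}}$; these two expressions agree precisely by relation (\ref{rel1}). With all the hypotheses of Theorem~\ref{main} established, its conclusion yields that $R'$ is a twisting map and that $a\ot v\mapsto av_{<-1>}\ot v_{<0>}$ is an algebra isomorphism $A\ot_{R'}B'\simeq A\ot_RB$, which is exactly the assertion to be proved.
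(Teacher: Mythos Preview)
Your proposal is correct and follows essentially the same route as the paper: identify $A\ot_RB$ with the crossed product $A\ot_{R,\sigma}B$ for $\sigma(b,b')=1_A\ot bb'$, use the algebra-map hypothesis on $\theta$ together with (\ref{rel2}) to reduce $\sigma'$ to $1_A\ot v*w$, observe that (\ref{cros4}) then collapses to (\ref{rel1}), and read off the conclusion from Theorem~\ref{main}. You have in fact supplied more detail than the paper, which leaves the computation of $\sigma'$ and the verification of (\ref{cros4}) as ``easy''.
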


We want to see how Theorem \ref{main} generalizes Theorem \ref{invttp}. 
We begin with the data $A$, $B$, $B'$, $R$, $\theta $, $\gamma $ as in the 
hypothesis of Theorem \ref{invttp}, consider the map  
$\sigma :B\ot B\rightarrow A\ot B$, $\sigma (b, b')=1_A\ot bb'$, 
for all $b, b'\in B$,  
so we have the crossed product  $A\ot _{R, \sigma }B$ 
with $A\ot _RB=A\ot _{R, \sigma }B$ and the maps $\theta , \gamma $ and 
so we can define the maps $R'$ and $\sigma '$ as in Theorem \ref{main} 
(of course, $V$ in Theorem \ref{main} is B as vector spaces). 
Obviously, the formula for $R'$ in Theorem \ref{main} is the same 
as the one for $R'$ in Theorem \ref{invttp}. We want to see how 
$\sigma '$ looks like. By using the fact that $\theta $ is an algebra 
map from $B'$ to $A\ot _RB$ and the formula 
$\sigma (b, b')=1_A\ot bb'$, an easy computation shows that we have 
$\sigma '(b, b')=1_A\ot b*b'$, for all $b, b'\in B$. Then one can 
easily see that the relations (\ref{cros1})--(\ref{cros4}) 
hold (note that (\ref{cros4}) reduces to (\ref{rel1})). Thus, 
all hypotheses of Theorem \ref{main} are fulfilled, so we 
have the crossed product $A\ot _{R', \sigma '}B$ and the algebra 
isomorphism $A\ot _{R', \sigma '}B\simeq A\ot _{R, \sigma }B$. But since 
$\sigma '$ is defined by $\sigma '(b, b')=1_A\ot b*b'$, it is obvious that 
the multiplication of $A\ot _{R', \sigma '}B$ is exactly the one of 
$A\ot _{R'}B'$, i.e. $A\ot _{R', \sigma '}B\equiv A\ot _{R'}B'$ and so 
we obtain the algebra isomorphism $A\ot _{R'}B'\simeq A\ot _RB$ 
as in Theorem \ref{invttp}.

We explain now that Theorem \ref{main} generalizes the invariance 
under twisting for (right) quasi-Hopf smash products (\cite{bpvo}), 
which we recall first (we use terminology and notation as 
in \cite{bpvo}).

Let $H$ be a quasi-bialgebra with associator $\Phi $ 
and $F\in H\ot H$ a gauge transformation, 
with notation $F=F^1\ot F^2$ and $F^{-1}=G^1\ot G^2$. 
Consider the Drinfeld twist of $H$, denoted by $H_F$, which is a 
quasi-bialgebra having the same underlying vector space, 
multiplication, unit and counit as $H$ and comultiplication and 
associator defined by 
\begin{eqnarray*}
&&\Delta _F(h)=F\Delta (h)F^{-1}, \;\;\;
\Phi_F=(1\ot F)(id \ot \Delta )(F) \Phi (\Delta \ot id)
(F^{-1})(F^{-1}\ot 1).
\end{eqnarray*}
We denote in what follows by $\Delta _F(h)=h_{(1)}\ot h_{(2)}$, 
$\Phi _F=\tilde{X}^1\ot \tilde{X}^2\ot \tilde{X}^3$, 
$\Phi _F^{-1}=\tilde{x}^1\ot \tilde{x}^2\ot \tilde{x}^3$. 

Let $B$ be a right $H$-module algebra (an algebra in the monoidal 
category of right $H$-modules), 
so we can consider the (right) smash product $H\#B$, which is an 
associative algebra having 
$H\ot B$ as underlying vector space, multiplication 
$(h\#b)(h'\#b')=hh'_1x^1\# (b\cdot h'_2x^2)(b'\cdot x^3)$ and unit 
$1_H\# 1_B$ (we denoted as usual $\Phi =X^1\ot X^2\ot X^3$ 
and $\Phi ^{-1}=x^1\ot x^2\ot x^3$ the associator of 
$H$ and its inverse).
\begin{proposition} (\cite{bpvo}) \label{invquasi}
If we introduce on $B$ another multiplication by 
$\;b*b'=(b\cdot F^1)(b'\cdot F^2)$ and denote this 
structure by $_FB$, then $_FB$ becomes a right $H_F$-module 
algebra with the same unit and $H$-action as for $B$ and we have 
an algebra isomorphism $H_F\# _FB\simeq H\# B$, $h\# b\mapsto 
hF^1\# b\cdot F^2$.  
\end{proposition}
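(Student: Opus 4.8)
The plan is to derive Proposition~\ref{invquasi} as a special case of Theorem~\ref{main}, in exactly the same spirit as the reduction of Theorem~\ref{invttp} that was just carried out. First I would identify the ingredients: take $A=H$, take $V=B$ as a vector space with distinguished element $1_V=1_B$, let $R:V\ot A\to A\ot V$ be the twisting map underlying the smash product $H\#B$, and let $\sigma:V\ot V\to A\ot V$ be the map encoding the (non-associative, because of $\Phi$) multiplication of $B$ together with the associativity constraint, so that $A\ot_{R,\sigma}V$ is precisely $H\#B$ as an algebra. Concretely $R(b\ot h)=h_1x^1\ot b\cdot h_2x^2\cdot(\text{extra }x^3)$-type data and $\sigma(b\ot b')$ involves the reassociator; the point is that one reads these off from the displayed multiplication $(h\#b)(h'\#b')=hh'_1x^1\#(b\cdot h'_2x^2)(b'\cdot x^3)$, and the defining conditions \eqref{brz1}--\eqref{brz5} for this $R,\sigma$ are just the axioms of a right $H$-module algebra rewritten via Brzezi\'nski's equivalence.

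Next I would choose the two maps $\theta,\gamma:V\to A\ot V$ that implement the Drinfeld twist. The isomorphism in Proposition~\ref{invquasi} is $h\#b\mapsto hF^1\#b\cdot F^2$, and comparing with the isomorphism $a\ot v\mapsto av_{<-1>}\ot v_{<0>}$ of Theorem~\ref{main} forces $\theta(b)=F^1\ot b\cdot F^2$, i.e.\ $b_{<-1>}=F^1$, $b_{<0>}=b\cdot F^2$. Correspondingly $\gamma$ must be the map built from $F^{-1}$, namely $\gamma(b)=G^1\ot b\cdot G^2$ with $G^1\ot G^2=F^{-1}$, since the inverse isomorphism should be $a\ot v\mapsto av_{\{-1\}}\ot v_{\{0\}}$ and $h\#b\mapsto hG^1\#b\cdot G^2$ inverts $h\#b\mapsto hF^1\#b\cdot F^2$. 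Then I would verify the hypotheses \eqref{cros1}--\eqref{cros4}: \eqref{cros1} is immediate from $F\cdot F^{-1}=1\ot1$ and counitality of $F$ as a gauge transformation; \eqref{cros2} and \eqref{cros3} are exactly $FF^{-1}=1\ot1$ and $F^{-1}F=1\ot1$ after unravelling the Sweedler notation and using that the $H$-action is unital, so $F^1(b\cdot F^2)_{<-1>}\ot\cdots$ collapses; these are genuinely short. The substantive check is \eqref{cros4}, which I expect reduces, after translating $\sigma$ and $\sigma'$ into $H$-module-algebra language, to the cocycle-type identity relating the reassociator $\Phi$ of $H$, the reassociator $\Phi_F$ of $H_F$, and the gauge transformation $F$ — precisely the defining formula $\Phi_F=(1\ot F)(\mathrm{id}\ot\Delta)(F)\Phi(\Delta\ot\mathrm{id})(F^{-1})(F^{-1}\ot1)$. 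This is the main obstacle: one must compute $R'$ and $\sigma'$ from the formulae in Theorem~\ref{main} and recognise the output as the twisting map and $\sigma$-map of the smash product $H_F\#{}_FB$.

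Granting those verifications, Theorem~\ref{main} immediately yields that $A\ot_{R',\sigma'}V$ is a crossed product and that $\varphi:a\ot v\mapsto av_{<-1>}\ot v_{<0>}$, i.e.\ $h\#b\mapsto hF^1\#b\cdot F^2$, is an algebra isomorphism $A\ot_{R',\sigma'}V\simeq A\ot_{R,\sigma}V=H\#B$. It then remains to identify $A\ot_{R',\sigma'}V$ with $H_F\#{}_FB$. For this I would compute $\sigma'$ explicitly: by the formula $\sigma'=(\mu\ot id_V)\circ(id_A\ot\gamma)\circ(\mu_2\ot id_V)\circ(id_A\ot id_A\ot\sigma)\circ(id_A\ot R\ot id_V)\circ(\theta\ot\theta)$, feeding in $\theta(b)=F^1\ot b\cdot F^2$ and $\gamma$, and using that the original $\sigma$ encodes $(b\cdot\xi)(b'\cdot\eta)$ against $\Phi^{-1}$, the result should rearrange — via the definition of $\Phi_F$ — into the map encoding the product $b*b'=(b\cdot F^1)(b'\cdot F^2)$ on ${}_FB$ reassociated by $\Phi_F$; similarly $R'$ should become the twisting map of $H_F\#{}_FB$, reflecting that ${}_FB$ is a right $H_F$-module algebra (a fact which drops out of this identification, or can be checked directly and is noted in \cite{bpvo}). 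Hence $A\ot_{R',\sigma'}V\equiv H_F\#{}_FB$ as algebras, and the isomorphism of Theorem~\ref{main} becomes exactly $H_F\#{}_FB\simeq H\#B$, $h\#b\mapsto hF^1\#b\cdot F^2$, which is the assertion of Proposition~\ref{invquasi}.
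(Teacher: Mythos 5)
Your proposal is correct and follows essentially the same route as the paper: realize $H\#B$ as the crossed product $H\ot_{R,\sigma}B$ with $R(b\ot h)=h_1\ot b\cdot h_2$ and $\sigma(b,b')=x^1\ot(b\cdot x^2)(b'\cdot x^3)$, set $\theta(b)=F^1\ot b\cdot F^2$ and $\gamma(b)=G^1\ot b\cdot G^2$, check (\ref{cros1})--(\ref{cros4}), and identify $R'=R_F$, $\sigma'=\sigma_F$. Your only slip is the tentative explicit formula for $R$: by counitality of the associator the $x^i$'s cancel and $R(b\ot h)=h_1\ot b\cdot h_2$ carries no reassociator, which is entirely absorbed into $\sigma$.
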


We want to see that Proposition \ref{invquasi} is a particular case 
of Theorem \ref{main}. Since $H\# B$ is an associative algebra 
with unit $1_H\# 1_B$ and its multiplication satisfies 
$(h\#1_B)(h'\#b')=hh'\#b'$, for all $h, h'\in H$ and $b\in B$, 
it follows that $H\# B$ is a crossed product, namely 
$H\# B=H\ot _{R, \sigma }B$, where the maps $R$, $\sigma $ are 
defined, for all $b, b'\in B$ and $h\in H$, by
$R:B\ot H\rightarrow H\ot B$, $R(b\ot h)=h_1\ot b\cdot h_2$ and 
$\sigma :B\ot B\rightarrow H\ot B$, $\sigma (b, b')=x^1\ot 
(b\cdot x^2)(b'\cdot x^3)$.  
Similarly, we have 
$H_F\# _FB=H\ot _{R_F, \sigma _F}B$, where 
$R_F(b\ot h)=h_{(1)}\ot b\cdot h_{(2)}$ and 
$\sigma _F(b, b')=\tilde{x}^1\ot 
(b\cdot \tilde{x}^2)*(b'\cdot \tilde{x}^3)$. 

If we define $\theta , \gamma :B\rightarrow H\ot B$ by 
$\theta (b)=F^1\ot b\cdot F^2$ and 
$\gamma (b)=G^1\ot b\cdot G^2$, for all 
$b\in B$, then one can check that the hypotheses of 
Theorem \ref{main} are fulfilled for the crossed product 
$H\ot _{R, \sigma }B$ and the maps $\theta , \gamma $. It  
is also easy to see that  
the maps $R'$, $\sigma '$ given by Theorem \ref{main} 
coincide exactly to the maps $R_F$ and respectively 
$\sigma _F$, thus Theorem \ref{main} implies 
$H\ot _{R_F, \sigma _F}B\simeq H\ot _{R, \sigma }B$, that is 
$H_F\# _FB\simeq H\# B$ (and obviously the explicit 
isomorphisms given by Theorem \ref{main} and 
Proposition \ref{invquasi} coincide). 

We explain now briefly how Theorem \ref{main} 
generalizes the equivalence of crossed products 
by a coalgebra proved in \cite{brz}, Proposition 3.1. 
We recall first the framework in \cite{brz}. Let $P$ be an 
algebra, $C$ a coalgebra with a fixed group-like element $e$, 
let $(P, C, \Psi , e, \Psi ^C)$ be an 
{\em entwining data} (see \cite{brz} for the definition) 
and consider the algebra $A=P_e^{co (C)}$. Assume that 
there exist linear maps $f:C\ot C\rightarrow A$, 
$F:C\ot P\rightarrow P$, satisfying a certain set of conditions. 
If we define 
\begin{eqnarray*}
&&R_F:C\ot A\rightarrow A\ot C, \;\;\;R_F=(F\ot id_C)\circ (id_C\ot 
\Psi )\circ (\Delta \ot id_A), \\
&&\sigma _f:C\ot C\rightarrow A\ot C, \;\;\;\sigma _f=
(f\ot id_C)\circ (id_C\ot \Psi ^C)\circ (\Delta \ot id_C), 
\end{eqnarray*}
then $A\ot _{R_F, \sigma _f}C$ is a crossed product, denoted by 
$A\gsl _{F, f}C$ and called a {\em crossed product by a coalgebra} 
(see \cite{brz}, Proposition 2.2 for details). 

Assume that moreover $\nu :C\rightarrow A$ is a convolution 
invertible map (with convolution inverse denoted by $\nu ^{-1}$) 
such that $\nu (e)=1$ and 
\begin{eqnarray}
&&\Psi ^C_{23}\circ \Psi _{12}\circ (id_C\ot \nu \ot id_C)\circ 
(id_C\ot \Delta )
=(\nu \ot id_C\ot id_C)\circ (\Delta \ot id_C)\ot \Psi ^C. \label{tare}
\end{eqnarray}
Define maps $F_{\nu }:C\ot P\rightarrow P$, 
$f_{\nu }:C\ot C\rightarrow A$ by some explicit formulae 
given in \cite{brz}, Proposition 3.1. Then 
$A\gsl _{F_{\nu }, f_{\nu }}C$ is a crossed product 
by a coalgebra, isomorphic to $A\gsl _{F, f}C$  as algebras 
(this is the equivalence of crossed products by a coalgebra proved in 
\cite{brz}, Proposition 3.1).

We want to see that this result is a particular case of 
Theorem \ref{main}. In the hypotheses of \cite{brz}, Proposition 3.1, 
consider the linear maps $\theta , \gamma :C\rightarrow A\ot C$, 
$\theta (c)=\nu (c_1)\ot c_2$, $\gamma (c)=\nu ^{-1}(c_1)\ot c_2$. 
One can check that, for the maps $\theta $, $\gamma $ and the crossed 
product  $A\ot _{R_F, \sigma _f}C$, the hypotheses of 
Theorem \ref{main} are fulfilled (for (\ref{cros4}) one has to use 
the relation (\ref{tare})). Thus, we can apply Theorem \ref{main}, 
which gives the maps $R_F'$ and $\sigma _f'$ and the 
algebra isomorphism $A\ot _{R_F', \sigma _f'}C\simeq 
A\ot _{R_F, \sigma _f}C$. On the other hand, 
a straightforward computation shows that 
$R_{F_{\nu }}=R_F'$ and $\sigma _{f_{\nu }}=\sigma _f'$. Thus, 
from Theorem \ref{main} we obtain $A\gsl _{F_{\nu }, f_{\nu }}C=
A\ot _{R_{F_{\nu }}, \sigma _{f_{\nu }}}C=
A\ot _{R_F', \sigma _f'}C\simeq A\ot _{R_F, \sigma _f}C=
A\gsl _{F, f}C$, q.e.d.

\end{document}